\theoremstyle{plain} 
\newtheorem{theorem}{\indent\sc Theorem}[section]
\newtheorem{proposition}[theorem]{\indent\sc Proposition}
\theoremstyle{definition} 
\newtheorem{remark}[theorem]{\indent\sc Remark}
\newtheorem{example}[theorem]{\indent\sc Example}
\title{Solitons and geometrical structures \\in a perfect fluid spacetime} 
\author{Adara M. Blaga}
\date{}
\begin{document}

\maketitle

\markboth{{\small\it {\hspace{4cm} Solitons in a perfect fluid spacetime}}}{\small\it{Solitons in a perfect fluid spacetime
\hspace{4cm}}}

\footnote{ 
2010 \textit{Mathematics Subject Classification}.
53B50, 53C44, 53C50, 83C02.
}
\footnote{ 
\textit{Key words and phrases}.
Ricci soliton, Einstein soliton, perfect fluid, Lorentz space.
}

\begin{abstract}
Geometrical aspects of a perfect fluid spacetime are described in terms of different curvature tensors and $\eta$-Ricci and $\eta$-Einstein solitons in a perfect fluid spacetime are determined. Conditions for the Ricci soliton to be steady, expanding or shrinking are also given. In a particular case when the potential vector field $\xi$ of the soliton is of gradient type, $\xi:=grad(f)$, we derive a Poisson equation from the soliton equation.
\end{abstract}

\section{Introduction}

Lorentzian manifolds form a special subclass of pseudo-Riemannian manifolds of great importance in general relativity, where spacetime can be modeled as a $4$-dimensional Lorentzian manifold of signature $(3,1)$ or, equivalently, $(1,3)$.

Relativistic fluid models are of great interest in different branches of astrophysics, plasma physics, nuclear physics, etc. Perfect fluids are often used in general relativity to model idealized distributions of matter, such as the interior of a star or an isotropic universe. Einstein's gravitational equation can describe the behavior of a perfect fluid inside a spherical object and the Friedmann-Lema\^{\i}tre-Robertson-Walker equations are used to describe the evolution of the universe. In general relativity, the source for the gravitational field is the energy-momentum tensor.
A perfect fluid can be completely characterized by its rest frame mass density and isotropic pressure. It has no shear stresses, viscosity, nor heat conduction, and it is characterized by an energy-momentum tensor of the form:
\begin{equation}\label{e3}
T(X,Y)=pg(X,Y)+(\sigma+p)\eta(X)\eta(Y),
\end{equation}
for any $X$, $Y\in\chi(M)$, where $p$ is the isotropic pressure, $\sigma$ is the energy-density, $g$ is the metric tensor of Minkowski spacetime, $\xi:=\sharp(\eta)$ is the velocity vector of the fluid and $g(\xi,\xi)=-1$. If $\sigma=-p$,
the energy-momentum tensor is Lorentz-invariant ($T=-\sigma g$) and in this case we talk about the \textit{vacuum}. If $\sigma=3p$, the medium is a \textit{radiation fluid}.

The field equations governing the perfect fluid motion are Einstein's gravitational equations:
\begin{equation}\label{e2}
kT(X,Y)=S(X,Y)+\left(\lambda-\frac{r}{2}\right)g(X,Y),
\end{equation}
for any $X$, $Y\in\chi(M)$, where $\lambda$ is the cosmological constant,
$k$ is the gravitational constant (which can be taken $8\pi G$, with $G$ the universal gravitational constant), $S$ is the Ricci tensor and $r$ is the scalar curvature of $g$. They are obtained from Einstein's equations by adding a cosmological constant in order to get a static universe, according to Einstein's idea. In modern cosmology, it is considered as a candidate for dark energy, the cause of the accelerated expansion of the universe.

Replacing $T$ from (\ref{e3}) we obtain:
\begin{equation}\label{e5}
S(X,Y)=-\left(\lambda-\frac{r}{2}-kp\right)g(X,Y)+k(\sigma+p)\eta(X)\eta(Y),
\end{equation}
for any $X$, $Y\in\chi(M)$. Recall that a manifold having the property that the Ricci tensor $S$ is a functional combination of $g$ and $\eta\otimes \eta$, for $\eta$ the $g$ dual $1$-form of a unitary vector field, is called \textit{quasi-Einstein} \cite{mai}. Quasi-Einstein manifolds arose during the study of exact solutions of Einstein's field equations. For example, Robertson-Walker spacetimes are quasi-Einstein manifolds \cite{de}. They also can be taken as a
model of the perfect fluid spacetime in general relativity \cite{degh}, \cite{degho}.

\textit{Ricci flow} and \textit{Einstein flow} are intrinsic geometric flows on a pseudo-Riemannian manifold, whose fixed points are \textit{solitons}. In our paper, we are interested in a generalized version of the following two types of solitons:
\begin{enumerate}
  \item \textit{Ricci solitons} \cite{ham}, which generates self-similar solutions to the Ricci flow:
\begin{equation}
\frac{\partial }{\partial t}g=-2S,
\end{equation}
  \item \textit{Einstein solitons} \cite{ca}, which generate self-similar solutions to the Einstein flow:
\begin{equation}
\frac{\partial }{\partial t}g=-2\left(S-\frac{r}{2}g\right).
\end{equation}
\end{enumerate}

Perturbing the equations that define these types of solitons by a multiple of a certain $(0,2)$-tensor field $\eta\otimes \eta$, we obtain two slightly more general notions, namely, \textit{$\eta$-Ricci solitons} and \textit{$\eta$-Einstein solitons}, which we shall consider in a perfect fluid spacetime, i.e. in a $4$-dimensional pseudo-Riemannian manifold $M$ with a Lorentzian metric $g$, whose content is a perfect fluid.

\section{Basic properties of a perfect fluid spacetime}

Let $(M,g)$ be a general relativistic perfect fluid spacetime satisfying (\ref{e5}).
Consider $\{E_i\}_{1\leq i \leq 4}$ an orthonormal frame field, i.e. $g(E_i,E_j)=\varepsilon_{ij}\delta_{ij}$, $i$, $j\in\{1,2,3,4\}$ with $\varepsilon_{11}=-1$, $\varepsilon_{ii}=1$, $i\in\{2,3,4\}$, $\varepsilon_{ij}=0$, $i,j\in\{1,2,3,4\}$, $i\neq j$. Let $\xi=\sum_{i=1}^{4}\xi^iE_i$. Then
$$-1=g(\xi,\xi)=\sum_{1\leq i,j\leq 4}\xi^i\xi^jg(E_i,E_j)=\sum_{i=1}^4\varepsilon_{ii}(\xi^i)^2$$
and
$$\eta(E_i)=g(E_i,\xi)=\sum_{j=1}^{4}\xi^jg(E_i,E_j)=\varepsilon_{ii}\xi^i.$$

Contracting (\ref{e5}) and taking into account that $g(\xi,\xi)=-1$, we get:
\begin{equation}\label{e8}
r=4\lambda+k(\sigma-3p).
\end{equation}

Therefore:
\begin{equation}\label{e52}
S(X,Y)=\left(\lambda+\frac{k(\sigma-p)}{2}\right)g(X,Y)+k(\sigma+p)\eta(X)\eta(Y),
\end{equation}
for any $X$, $Y\in\chi(M)$.

\begin{example}
A radiation fluid has constant scalar curvature equal to $4\lambda$.
\end{example}

Assume now that $p$ and $\sigma$ are constant.
If we denote by $\nabla$ the Levi-Civita connection associated to $g$, then for any $X$, $Y$, $Z\in\chi(M)$:
\begin{equation}\label{e53}
(\nabla_XS)(Y,Z):=X(S(Y,Z))-S(\nabla_XY,Z)-S(Y,\nabla_XZ)=$$
$$=k(\sigma+p)[\eta(Y)g(\nabla_X\xi,Z)+\eta(Z)g(\nabla_X\xi,Y)]=$$
$$=k(\sigma+p)[\eta(Y)(\nabla_X\eta)Z+\eta(Z)(\nabla_X\eta)Y].
\end{equation}

Imposing different conditions on the covariant differential of $S$, we have that:

i) $M$ is \textit{Ricci symmetric} if $\nabla S=0$;

ii) $S$ is a \textit{Codazzi tensor} if $(\nabla_X S)(Y,Z)=(\nabla_Y S)(X,Z)$, for any $X$, $Y$, $Z\in \chi(M)$;

iii) $S$ is \textit{$\alpha$-recurrent} if $(\nabla_XS)(Y,Z)=\alpha(X)S(Y,Z)$, for any $X$, $Y$, $Z\in \chi(M)$, with $\alpha$ a nonzero $1$-form;

iv) $S$ is \textit{weakly pseudo Ricci symmetric} if $S$ is not identically zero and $(\nabla_XS)(Y,Z)=\alpha(X)S(Y,Z)+\alpha(Y)S(Z,X)+\alpha(Z)S(X,Y)$, for any $X$, $Y$, $Z\in \chi(M)$, with $\alpha$ a nonzero $1$-form;

v) $S$ is \textit{pseudo Ricci symmetric} if $S$ is not identically zero and $(\nabla_XS)(Y,Z)=2\alpha(X)S(Y,Z)+\alpha(Y)S(Z,X)+\alpha(Z)S(X,Y)$, for any $X$, $Y$, $Z\in \chi(M)$, with $\alpha$ a nonzero $1$-form.

\begin{proposition}\label{p}
Let $(M,g)$ be a general relativistic perfect fluid spacetime satisfying (\ref{e52}) with $p$ and $\sigma$ constant.
\begin{enumerate}
  \item If $M$ is Ricci symmetric or $S$ is a Codazzi tensor, then $p=-\sigma$ or $\nabla \xi=0$.
   \item If $S$ is $\alpha$-recurrent, then $p=-\sigma=\frac{\lambda}{k}$ or $\nabla \xi=0$.
   \item If $S$ is (weakly) pseudo Ricci symmetric, then $p=\frac{2}{3}(\frac{\lambda}{k})-\frac{\sigma}{3}$. In this cases, $\xi$ is torse-forming (in particular, irrotational and geodesic) vector field and $\eta$ is a closed (and Codazzi) $1$-form.
  \end{enumerate}
\end{proposition}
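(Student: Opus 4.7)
My approach rests on two structural facts already in hand: the explicit form (\ref{e52}) of the Ricci tensor and the explicit form (\ref{e53}) of $\nabla S$, which converts any covariant-derivative hypothesis on $S$ into a statement about $\nabla\eta$. I shall also use that $g(\xi,\xi)=-1$ is constant, so $(\nabla_X\eta)\xi=-g(\nabla_X\xi,\xi)=0$ and $\eta(\xi)=-1$; in particular $S(\xi,Z)=C\,\eta(Z)$ with $C:=\lambda-\tfrac{k(\sigma+3p)}{2}$, while $B:=k(\sigma+p)$ is the coefficient that appears in (\ref{e53}). The common strategy across all three parts is to evaluate each hypothesis at $Y=\xi$ or $Y=Z=\xi$ to extract scalar conditions on $B$ and $C$, after which the residual information pins down $\nabla\eta$.

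For part (1), plugging $\nabla S=0$ into (\ref{e53}) and setting $Y=\xi$ eliminates the $(\nabla_X\eta)\xi$ term and leaves $B\,(\nabla_X\eta)Z=0$, which is the stated dichotomy. The Codazzi case proceeds by analogous substitutions (first $Z=\xi$ to obtain $d\eta=0$, then $Y=\xi$ combined with $Z=\xi$ to obtain $\nabla_\xi\eta=0$, and finally $Y=\xi$ again to conclude $\nabla\eta=0$), always modulo the factor $B$.

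For part (2), I first set $Y=Z=\xi$ in the $\alpha$-recurrence relation: the LHS vanishes while the RHS equals $-C\alpha(X)$, so $C=0$ since $\alpha\not\equiv0$. Then $Y=\xi$ (with $Z$ free) reduces the relation to $B\,(\nabla_X\eta)Z=0$. The apparently separate branch $\nabla\xi=0$ is not genuine: it would force $\nabla S=0$, hence $\alpha(X)\,S(Y,Z)=0$, hence $S\equiv 0$, and then (\ref{e52}) gives $B=0$ anyway. Thus $B=0$, i.e., $\sigma+p=0$, and combined with $C=0$ this yields $p=-\sigma=\lambda/k$.

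For part (3) I treat the weakly pseudo Ricci symmetric case; the pseudo Ricci symmetric case is identical up to numerical factors. Setting $Y=Z=\xi$ and using $S(\xi,\cdot)=C\eta$, the LHS vanishes and the RHS collapses to $C\bigl(-\alpha(X)+2\alpha(\xi)\eta(X)\bigr)$; if $C\neq 0$ then $\alpha(X)=2\alpha(\xi)\eta(X)$, and $X=\xi$ gives $\alpha(\xi)=0$, forcing $\alpha\equiv0$, contradiction. Hence $C=0$, which is the stated value of $p$. Then $A:=\lambda+\tfrac{k(\sigma-p)}{2}$ equals $B$ and $S=B(g+\eta\otimes\eta)$; substituting $Y=\xi$ into the defining identity now yields, after cancelling $B$ (nonzero, else the identity would force $\alpha\equiv 0$), the equation $(\nabla_X\eta)Z=-\alpha(\xi)\bigl[g(X,Z)+\eta(X)\eta(Z)\bigr]$, i.e., $\nabla_X\xi=-\alpha(\xi)\bigl(X+\eta(X)\xi\bigr)$, which is exactly the torse-forming relation. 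Its right-hand side is symmetric in $X$ and $Z$, so $d\eta=0$ and $\eta$ is Codazzi (hence $\xi$ is irrotational); specializing to $X=\xi$ gives $\nabla_\xi\xi=0$, so $\xi$ is geodesic. The main technical subtlety across the three parts is closing the $\nabla\xi=0$ loophole in (2) via the non-vanishing of $S$; everything else is driven by the eigendecomposition of $S$ along and transverse to $\xi$.
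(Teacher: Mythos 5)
Your argument follows essentially the same route as the paper's: each part is handled by substituting $\xi$ into the arguments of the hypothesis and exploiting the explicit forms (\ref{e52}) and (\ref{e53}), and the particular substitutions you choose ($Y=\xi$ rather than $Y=Z$ in part (1), $Y=\xi$ rather than $Z=\xi$ in part (3)) are equivalent to the paper's by the symmetry of $S$ and of $\nabla_XS$. Your part (2) actually goes a little further: by noting that $\nabla\xi=0$ forces $\nabla S=0$, hence $\alpha(X)S(Y,Z)=0$, hence $S=0$ and $k(\sigma+p)=0$, you collapse the stated disjunction to the single conclusion $p=-\sigma=\lambda/k$. That is a legitimate strengthening (modulo the usual pointwise-versus-global care at points where $\alpha$ vanishes, about which the paper is equally casual), and it of course still proves the disjunction as stated.

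One justification in part (3) is wrong as written: you cancel $B$ on the grounds that $B=0$ ``would force $\alpha\equiv0$.'' It would not --- if $B=0$ then, together with $C=0$, one gets $S\equiv0$ and $\nabla S\equiv0$, so the weakly pseudo Ricci symmetric identity degenerates to $0=0$ and places no constraint on $\alpha$ at all. The correct reason $B\neq0$ is the one the paper invokes: the definition of (weakly) pseudo Ricci symmetry explicitly requires $S$ to be not identically zero, and $B=0$ (with $A=B$) would contradict that. The conclusion and the rest of the derivation of the torse-forming relation are fine; only this parenthetical needs to be replaced.
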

\begin{proof}
\begin{enumerate}
  \item If $\nabla S=0$, writing (\ref{e53}) for $Y=Z$ we get $(\sigma+p)\eta(Y)g(\nabla_X\xi,Y)=0$, for any $X$, $Y\in \chi(M)$. It follows $\nabla\xi=0$ or $p=-\sigma$.

If $(\nabla_X S)(Y,Z)=(\nabla_Y S)(X,Z)$, for any $X$, $Y$, $Z\in \chi(M)$, using (\ref{e53}) for $Y=Z:=\xi$ we get $(\sigma+p)g(\nabla_{\xi}\xi,X)=0$, for any $X\in \chi(M)$. It follows $\nabla_{\xi}\xi=0$ or $p=-\sigma$. If $\nabla_{\xi}\xi=0$, using (\ref{e53}) for $X:=\xi$ we get $(\sigma+p)g(\nabla_Y\xi,Z)=0$, for any $Y$, $Z\in \chi(M)$. It follows $\nabla\xi=0$ or $p=-\sigma$.
 \item If $(\nabla_XS)(Y,Z)=\alpha(X)S(Y,Z)$, for any $X$, $Y$, $Z\in \chi(M)$, using (\ref{e52}) and (\ref{e53}) and writing the obtained relation for $Y=Z:=\xi$ we get
      $$\left(\lambda-\frac{k(\sigma+3p)}{2}\right)\alpha(X)=0,$$ for any $X\in \chi(M)$. It follows $p=\frac{2\lambda-k\sigma}{3k}$. Writing the same relation for $Z:=\xi$ we get
      $$-k(\sigma+p)g(\nabla_X\xi,Y)=\left(\lambda-\frac{k(\sigma+3p)}{2}\right)\alpha(X)\eta(Y)=0,$$ for any $X$, $Y\in \chi(M)$. It follows $\nabla \xi=0$ or $p=-\sigma$ (so $p=-\sigma=\frac{\lambda}{k}$).
      \item If $(\nabla_XS)(Y,Z)=\alpha(X)S(Y,Z)+\alpha(Y)S(Z,X)+\alpha(Z)S(X,Y)$, for any $X$, $Y$, $Z\in \chi(M)$, using (\ref{e52}) and (\ref{e53}) and writing the obtained relation for $Y=Z:=\xi$ we get
      $$\left(\lambda-\frac{k(\sigma+3p)}{2}\right)[\alpha(X)-2\alpha(\xi)\eta(X)]=0,$$ for any $X\in \chi(M)$. Writing the same relation for $X=Y=Z:=\xi$ we get $$\left(\lambda-\frac{k(\sigma+3p)}{2}\right)\alpha(\xi)=0.$$
      If $\lambda-\frac{k(\sigma+3p)}{2}\neq 0$ follows $\alpha(\xi)=0$ and $\alpha(X)=2\alpha(\xi)\eta(X)=0$, for any $X\in \chi(M)$, which contradicts the fact that $\alpha$ is nonzero. Therefore, $\lambda-\frac{k(\sigma+3p)}{2}=0$, so $p=\frac{2\lambda-k\sigma}{3k}$.

      Writing now the obtained relation for $Z:=\xi$ we get
      $$k(\sigma+p)\{g(\nabla_X\xi,Y)+\alpha(\xi)[g(X,Y)+\eta(X)\eta(Y)]\}=0,$$ for any $X$, $Y\in \chi(M)$.

      If $\sigma+p=0$ follows $p=-\sigma=\frac{\lambda}{k}$ and from (\ref{e52}), $S(X,Y)=0$, for any $X$, $Y\in \chi(M)$, which contradicts the fact that $S$ is nonzero. Therefore: $$g(\nabla_X\xi,Y)+\alpha(\xi)[g(X,Y)+\eta(X)\eta(Y)]=0,$$ for any $X$, $Y\in \chi(M)$ which implies
      $$\nabla_X\xi=-\alpha(\xi)[X+\eta(X)\xi],$$ for any $X\in \chi(M)$, i.e. $\xi$ is a torse-forming vector field. Then $$(curl(\xi))(X,Y):=g(\nabla_X\xi,Y)-g(\nabla_Y\xi,X)=0,$$ for any $X$, $Y\in \chi(M)$, i.e. $\xi$ is irrotational. In particular $$g(\nabla_{\xi}\xi,X)=g(\nabla_X\xi,\xi)=\frac{1}{2}X(g(\xi,\xi))=0,$$ for any $X\in \chi(M)$, i.e. $\xi$ is a geodesic vector field.

      Concerning $\eta$, notice that $$(\nabla_X\eta)Y-(\nabla_Y\eta)X:=X(\eta(Y))-\eta(\nabla_XY)-Y(\eta(X))+\eta(\nabla_YX):=$$$$:=(d\eta)(X,Y).$$

      Also
      $$X(\eta(Y))-\eta(\nabla_XY)-Y(\eta(X))+\eta(\nabla_YX)=g(\nabla_X\xi,Y)-g(\nabla_Y\xi,X)=0$$
      which implies $d\eta=0$.

      If $(\nabla_XS)(Y,Z)=2\alpha(X)S(Y,Z)+\alpha(Y)S(Z,X)+\alpha(Z)S(X,Y)$, for any $X$, $Y$, $Z\in \chi(M)$, following the steps of the computations above, we get the same conclusions.
\end{enumerate}
\end{proof}

Remark the following facts:

\smallskip

1) If in the general relativistic perfect fluid spacetime the vector field $\xi$ is not $\nabla$-parallel and the Ricci tensor field $S$ is $\alpha$-recurrent, we have the vacuum case. The energy-momentum tensor is $T_{vac}=\frac{\lambda}{k}g$.

\smallskip

2) $S(X,\xi)=-S(\xi,\xi)\eta(X)=\left[\lambda-\frac{k(\sigma+3p)}{2}\right]g(X,\xi)$, for any $X\in \chi(M)$ which shows that $\lambda-\frac{k(\sigma+3p)}{2}$ is the eigenvalue of $Q$ corresponding to the eigenvector $\xi$, where $Q$ is defined by $g(QX,Y):=S(X,Y)$, $X$, $Y\in \chi(M)$. From Proposition \ref{p} we deduce that if $\xi$ is not $\nabla$-parallel and $M$ is Ricci symmetric or $S$ is a Codazzi tensor, then $(M,g)$ is Einstein and $S(X,\xi)=(\lambda+k\sigma)\eta(X)$, for any $X\in \chi(M)$, and if $S$ is (weakly) pseudo Ricci symmetric, then $S(X,\xi)=0$, for any $X\in \chi(M)$, hence $\xi\in \ker Q$; also, $div(\xi):=\sum_{i=1}^4\varepsilon_{ii}g(\nabla_{E_i}\xi,E_i)=-3\alpha(\xi)$. Notice that in all these cases, if $\sigma>-\frac{\lambda}{k}$, the scalar curvature $r:=\sum_{i=1}^4\varepsilon_{ii}S(E_i,E_i)=4\lambda+k(\sigma-3p)>\lambda-kp$ is positive.

\smallskip

3) Considering Plebanski energy conditions $\sigma\geq 0$ and $-\sigma\leq p \leq \sigma$ for perfect fluids, when $S$ is (weakly) pseudo Ricci symmetric, the energy-density is lower bounded by $\max \{-\frac{\lambda}{k},\frac{\lambda}{2k}\}$. It was observed that a positive cosmological constant $\lambda$ acts as repulsive gravity, explaining the accelerating universe. The observations of Edwin Hubble confirmed that the universe is expanding, therefore, this seems to be the real case, so $\sigma \geq \frac{\lambda}{2k}$.

\smallskip

4) Let $hX:=X+\eta(X)\xi$ be the projection tensor, $X\in \chi(M)$. If $S$ is (weakly) pseudo Ricci symmetric and $\alpha(\xi)\neq 0$, then $(h,\xi,\eta,g)$ is a \textit{Lorentzian concircular structure} \cite{shaik}. In the particular case $\alpha(\xi)=-1$ it becomes \textit{LP-Sasakian structure} \cite{matsu}. In the other case, if $\alpha(\xi)=0$, then $\xi$ and $\eta$ are $\nabla$-parallel, $\xi$ is divergence-free, hence harmonic vector field.

\bigskip

Assume now that $p$ and $\sigma$ are not necessarily constant. Applying the covariant derivative to (\ref{e2}) we obtain:
\begin{equation}
k(\nabla_XT)(Y,Z)=(\nabla_XS)(Y,Z)-\frac{1}{2}X(r)g(Y,Z)=$$$$=k\{X(p)g(Y,Z)+
X(\sigma+p)\eta(Y)\eta(Z)+$$$$+(\sigma+p)[\eta(Y)g(\nabla_X\xi,Z)+\eta(Z)g(\nabla_X\xi,Y)]\},
\end{equation}
for any $X$, $Y$, $Z\in\chi(M)$.

\begin{proposition}
Let $(M,g)$ be a general relativistic perfect fluid spacetime satisfying (\ref{e52}).
\begin{enumerate}
  \item If $T$ is covariantly constant, then the energy-density is constant.
  \item If $T$ is a Codazzi tensor, then $d\sigma=-\xi(\sigma)\eta-(\sigma+p)\flat({\nabla_{\xi}\xi})$.
  \item If $T$ is $\alpha$-recurrent, then $d\sigma=\sigma\alpha$.
\end{enumerate}
\end{proposition}
\begin{proof}
\begin{enumerate}
  \item If $T$ is covariantly constant, i.e. $(\nabla_X T)(Y,Z)=0$, for any $X$, $Y$, $Z\in\chi(M)$, taking $Y=Z:=\xi$ we get $X(\sigma)=0$, for any $X\in\chi(M)$.
Also from (\ref{e8}) we get $dr=-3kdp$.
  \item The condition $(\nabla_XT)(Y,Z)=(\nabla_YT)(X,Z)$, for any $X$, $Y$, $Z\in\chi(M)$ is equivalent to:
$$X(p)g(Y,Z)+
X(\sigma+p)\eta(Y)\eta(Z)+$$$$+(\sigma+p)[\eta(Y)g(\nabla_X\xi,Z)+\eta(Z)g(\nabla_X\xi,Y)]=$$$$=
Y(p)g(X,Z)+
Y(\sigma+p)\eta(X)\eta(Z)+$$$$+(\sigma+p)[\eta(X)g(\nabla_Y\xi,Z)+\eta(Z)g(\nabla_Y\xi,X)],$$
for any $X$, $Y$, $Z\in\chi(M)$.
For $Y=Z:=\xi$ and taking into account that $g(\nabla_X\xi,\xi)=0$, for any $X\in\chi(M)$, the above relation becomes:
$$X(\sigma)=-\xi(\sigma)\eta(X)-(\sigma+p)g(\nabla_{\xi}\xi,X),$$
for any $X\in\chi(M)$.
  \item The condition $(\nabla_XT)(Y,Z)=\alpha(X)T(Y,Z)$, for any $X$, $Y$, $Z\in\chi(M)$, with $\alpha$ a nonzero $1$-form, is equivalent to:
$$X(p)g(Y,Z)+
X(\sigma+p)\eta(Y)\eta(Z)+$$$$+(\sigma+p)[\eta(Y)g(\nabla_X\xi,Z)+\eta(Z)g(\nabla_X\xi,Y)]=$$$$=
\alpha(X)[pg(Y,Z)+(\sigma+p)\eta(Y)\eta(Z)],$$
for any $X$, $Y$, $Z\in\chi(M)$. For $Y=Z:=\xi$ and taking into account that $g(\nabla_X\xi,\xi)=0$, for any $X\in\chi(M)$, the above relation becomes:
$$X(\sigma)=\sigma\alpha(X),$$
for any $X\in\chi(M)$. Also from (\ref{e8}) we get $dr=k(\sigma \alpha-3dp)$.
\end{enumerate}
\end{proof}

Remark the following facts:

\smallskip

If $T$ is a Codazzi tensor and $\xi$ is a geodesic vector field or the fluid is the vacuum, then the gradient of the energy-density is collinear with $\xi$.
If $T$ is a Codazzi tensor and the energy-density is constant, then we have the vacuum case or $\xi$ is a geodesic vector field.
It was proved that for a Codazzi energy-momentum tensor, the Ricci tensor $S$ is conserved \cite{ah}.

\section{Perfect fluid spacetime with torse-forming vector field $\xi$}

We shall treat the special case when $\xi$ is a torse-forming vector field \cite{ya} of the form:
\begin{equation}
\nabla \xi=I_{\chi(M)}+\eta\otimes \xi.
\end{equation}

Then $\nabla_{\xi}\xi=\xi+\eta(\xi)\xi=0$, i.e. $\xi$ is a geodesic vector field, and we have $g(\nabla_X\xi,\xi)=0$ and $(d\eta)(X,Y):=X(\eta(Y))-Y(\eta(X))-\eta([X,Y])=X(g(Y,\xi))-Y(g(X,\xi))-g(\nabla_XY,\xi)+g(\nabla_YX,\xi)=
g(\nabla_X\xi,Y)-g(\nabla_Y\xi,X)=0$. We also get:
\begin{equation}\label{e23}
R(X,Y)\xi=\eta(Y)X-\eta(X)Y,
\end{equation}
\begin{equation}\label{e24}
\eta(R(X,Y)Z)=-\eta(Y)g(X,Z)+\eta(X)g(Y,Z),
\end{equation}
for any $X$, $Y$, $Z\in \chi(M)$.

In this case, we shall see which are the consequences of certain conditions imposed to different types of curvatures of this space, namely, when the curvature satisfies conditions of the type $(\xi,\cdot)_{\mathcal{T}}\cdot S=0$ and $(\xi,\cdot)_{S}\cdot \mathcal{T}=0$, where
$\mathcal{T}$ stands for the \textit{Riemann curvature tensor} $R$,
the \textit{concircular curvature tensor} $\mathcal{P}$ \cite{yan}, the \textit{conformal curvature tensor} $\mathcal{C}$ \cite{ad} and the \textit{conharmonic curvature tensor} $\mathcal{H}$ \cite{po} defined as follows:
\begin{equation}\label{e28}
\mathcal{P}(X,Y)Z:=R(X,Y)Z+\frac{r}{\dim(M)(\dim(M)-1)}[g(Z,X)Y-g(Y,Z)X]
\end{equation}
\begin{equation}\label{e31}
\mathcal{C}(X,Y)Z:=R(X,Y)Z+
$$
$$+\frac{1}{\dim(M)-2}\{-\frac{r}{\dim(M)-1}[g(Z,X)Y-g(Y,Z)X]+$$$$+g(Z,X)QY-g(Y,Z)QX+S(Z,X)Y-S(Y,Z)X\}
\end{equation}
\begin{equation}\label{e32}
\mathcal{H}(X,Y)Z:=R(X,Y)Z+$$
$$+\frac{1}{\dim(M)-2}[g(Z,X)QY-g(Y,Z)QX+S(Z,X)Y-S(Y,Z)X]
\end{equation}
for any $X$, $Y$, $Z\in \chi(M)$.

Remark that for an empty gravitational field characterized by vanishing Ricci tensor, the curvature tensors $R$, $\mathcal{P}$, $\mathcal{C}$ and $\mathcal{H}$ coincide.

Let us denote by $\mathcal{T}$ a curvature tensor of type $(1,3)$ and ask for certain Ricci-semisymmetry curvature conditions, namely, $(\xi,\cdot)_{\mathcal{T}}\cdot S=0$ and $(\xi,\cdot)_{S}\cdot \mathcal{T}=0$, where by $\cdot$ we denote the derivation of the tensor algebra at each point of the tangent space:
\begin{itemize}
  \item $((\xi,X)_{\mathcal{T}}\cdot S)(Y,Z):=((\xi\wedge_\mathcal{T}X)\cdot S)(Y,Z):=S((\xi\wedge_{\mathcal{T}}X)Y,Z)+S(Y,(\xi\wedge_{\mathcal{T}}X)Z)$, for $(X\wedge_{\mathcal{T}}Y)Z:=\mathcal{T}(X,Y)Z$;
  \item $((\xi,X)_{S}\cdot \mathcal{T})(Y,Z)W:=(\xi\wedge_SX)\mathcal{T}(Y,Z)W+\mathcal{T}((\xi\wedge_SX)Y,Z)W+\linebreak \mathcal{T}(Y,(\xi\wedge_SX)Z)W+\mathcal{T}(Y,Z)(\xi\wedge_SX)W$, for $(X\wedge_SY)Z:=S(Y,Z)X-S(X,Z)Y$.
\end{itemize}

\subsection{Perfect fluid spacetime satisfying $(\xi,\cdot)_{\mathcal{T}}\cdot S=0$}

The condition is equivalent to
\begin{equation}
S(\mathcal{T}(\xi,X)Y,Z)+S(Y,\mathcal{T}(\xi,X)Z)=0,
\end{equation}
for any $X$, $Y$, $Z\in \chi(M)$ and from (\ref{e52}) we get:
$$\left(\lambda+\frac{k(\sigma-p)}{2}\right)[g(\mathcal{T}(\xi,X)Y,Z)+g(Y,\mathcal{T}(\xi,X)Z)]+$$$$+k(\sigma+p)[\eta(\mathcal{T}(\xi,X)Y)\eta(Z)+
\eta(Y)\eta(\mathcal{T}(\xi,X)Z)]=0,$$
for any $X$, $Y$, $Z\in \chi(M)$.

\begin{theorem}
Let $(M,g)$ be a general relativistic perfect fluid spacetime satisfying (\ref{e5}) with torse-forming vector field $\xi$.
\begin{enumerate}
  \item If $(\xi,\cdot)_{R}\cdot S=0$, then $p=-\sigma$.
  \item If $(\xi,\cdot)_{\mathcal{P}}\cdot S=0$, then $p=-\sigma$ or $p=\frac{4\lambda+k\sigma-12}{3k}$.
  \item If $(\xi,\cdot)_{\mathcal{C}}\cdot S=0$, then $p=-\sigma$ or $p=\frac{2\lambda-k\sigma-6}{3k}$.
  \item If $(\xi,\cdot)_{\mathcal{H}}\cdot S=0$, then $p=-\sigma$ or $p=\frac{\lambda-1}{k}$.
\end{enumerate}
\end{theorem}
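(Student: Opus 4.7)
The plan is to handle all five parts by a single template: set $Y=\xi$ in the Ricci-semisymmetry condition
$$S(\mathcal{T}(\xi,X)Y,Z)+S(Y,\mathcal{T}(\xi,X)Z)=0$$
and reduce it, via the explicit form of $S$ in (\ref{e52}), to a scalar polynomial equation in the two parameters $a:=\lambda+\tfrac{k(\sigma-p)}{2}$ and $b:=k(\sigma+p)$. First I would record the auxiliary identities that follow from the torse-forming hypothesis on $\xi$ together with (\ref{e52}): the Ricci operator $Q$ acts as $QX=aX+b\eta(X)\xi$, so $Q\xi=(a-b)\xi$ and $S(\xi,W)=(a-b)\eta(W)$, while $S(X+\eta(X)\xi,Z)=a\,\Phi(X,Z)$, where $\Phi(X,Z):=g(X,Z)+\eta(X)\eta(Z)$. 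From (\ref{e23}) and (\ref{e24}) one has the clean identities $R(\xi,X)\xi=X+\eta(X)\xi$ and $\eta(R(\xi,X)Z)=-\Phi(X,Z)$.

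For each $\mathcal{T}\in\{R,\mathcal{W},\mathcal{P},\mathcal{C},\mathcal{H}\}$, I would then plug (\ref{e25})--(\ref{e32}) into these slots and compute
$$\mathcal{T}(\xi,X)\xi=\mu_{\mathcal{T}}\bigl(X+\eta(X)\xi\bigr),\qquad \eta\bigl(\mathcal{T}(\xi,X)Z\bigr)=\nu_{\mathcal{T}}\,\Phi(X,Z)$$
for explicit scalars $\mu_{\mathcal{T}},\nu_{\mathcal{T}}$ in $a$, $b$ (and possibly $scal$). That the corrections to $R$ feed back cleanly into multiples of $X+\eta(X)\xi$ and of $\Phi(X,Z)$ is because each correction in (\ref{e25})--(\ref{e32}) is built only from $g$, $Q$, and $S$, all of which reduce to scalars in the $\xi$ and $\eta$ directions via the above identities. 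Substituting into the semisymmetry condition with $Y=\xi$, everything collapses to
$$\bigl[a\mu_{\mathcal{T}}+(a-b)\nu_{\mathcal{T}}\bigr]\Phi(X,Z)=0;$$
evaluating at $X=Z$ a unit spacelike vector (so $\Phi(X,X)=1$) forces the bracket to vanish.

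The last step is to verify that in each case this bracket factors as $b\cdot \Psi_{\mathcal{T}}(a,b,scal)=0$, giving the dichotomy $p=-\sigma$ or $\Psi_{\mathcal{T}}=0$; the second branch is linear in $p$ once $scal$ is eliminated via (\ref{e8}) and yields the announced values. I expect the main obstacle to be the bookkeeping in the conformal case (\ref{e31}): the extra piece $S(Z,X)Y-S(Y,Z)X$ contributes cross terms $(2a-b)\Phi(X,Z)$ to $\eta(\mathcal{C}(\xi,X)Z)$ that must combine carefully with the $g(Z,X)QY-g(Y,Z)QX$ contributions before the common factor of $b$ emerges. Once that cancellation is handled, substituting $scal=4\lambda+k(\sigma-3p)$ into the relation $b-2a+2+\tfrac{scal}{3}=0$ yields $p=\tfrac{2\lambda-k\sigma-6}{3k}$; the simpler computations for $\mathcal{W}$, $\mathcal{P}$, $\mathcal{H}$ produce the factor equations $3+b-2a=0$, $1-\tfrac{scal}{12}=0$, and $b-2a+2=0$, giving $p=\tfrac{2\lambda-3}{2k}$, $\tfrac{4\lambda+k\sigma-12}{3k}$, and $\tfrac{\lambda-1}{k}$ respectively, while for $R$ the bracket already equals $b$ and forces $p=-\sigma$ directly.
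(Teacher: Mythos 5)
Your proposal is correct and follows essentially the same route as the paper: both substitute the quasi-Einstein form (\ref{e52}) of $S$ together with the torse-forming identities (\ref{e23})--(\ref{e24}) into the semisymmetry condition and reduce it to a scalar multiple of $g+\eta\otimes\eta$ whose coefficient factors through $k(\sigma+p)$, yielding the stated dichotomies. The only difference is organizational---you specialize $Y:=\xi$ at the outset and package the computation in the scalars $\mu_{\mathcal{T}}$, $\nu_{\mathcal{T}}$, whereas the paper keeps all slots general and sets $Z:=\xi$ at the end---and all five of your factor equations reproduce the announced pressures.
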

\begin{proof}
\begin{enumerate}
  \item From the symmetries of $R$ and (\ref{e23}) and (\ref{e24}) we obtain:
\begin{equation}\label{e21}
k(\sigma+p)[\eta(Z)g(X,Y)+\eta(Y)g(Z,X)+2\eta(X)\eta(Y)\eta(Z)]=0,
\end{equation}
for any $X$, $Y$, $Z\in \chi(M)$. Take $Z:=\xi$ and (\ref{e21}) becomes:
\begin{equation}\label{e22}
k(\sigma+p)[g(X,Y)+\eta(X)\eta(Y)]=0,
\end{equation}
for any $X$, $Y\in \chi(M)$ and we obtain $p=-\sigma$.
  \item From (\ref{e28}), from the symmetries of $R$ and (\ref{e23}) and (\ref{e24}) we obtain:
\begin{equation}\label{e29}
k(\sigma+p)[4\lambda+k(\sigma-3p)-12][\eta(Z)g(X,Y)+\eta(Y)g(Z,X)+2\eta(X)\eta(Y)\eta(Z)]=0,
\end{equation}
for any $X$, $Y$, $Z\in \chi(M)$. Take $Z:=\xi$ and (\ref{e29}) becomes:
\begin{equation}\label{e30}
k(\sigma+p)[4\lambda+k(\sigma-3p)-12][g(X,Y)+\eta(X)\eta(Y)]=0,
\end{equation}
for any $X$, $Y\in \chi(M)$ and we obtain $p=-\sigma$ or $p=\frac{4\lambda+k\sigma-12}{3k}$.
  \item From (\ref{e31}), from the symmetries of $R$ and (\ref{e23}) and (\ref{e24}) we obtain:
\begin{equation}\label{...}
k(\sigma+p)[2\lambda-k(\sigma+3p)-6][\eta(Z)g(X,Y)+\eta(Y)g(Z,X)+2\eta(X)\eta(Y)\eta(Z)]=0,
\end{equation}
for any $X$, $Y$, $Z\in \chi(M)$. Take $Z:=\xi$ and (\ref{...}) becomes:
\begin{equation}\label{...}
k(\sigma+p)[2\lambda-k(\sigma+3p)-6][g(X,Y)+\eta(X)\eta(Y)]=0,
\end{equation}
for any $X$, $Y\in \chi(M)$ and we obtain $p=-\sigma$ or $p=\frac{2\lambda-k\sigma-6}{3k}$.
  \item From (\ref{e32}), from the symmetries of $R$ and (\ref{e23}) and (\ref{e24}) we obtain:
\begin{equation}\label{...}
k(\sigma+p)(\lambda-kp-1)[\eta(Z)g(X,Y)+\eta(Y)g(Z,X)+2\eta(X)\eta(Y)\eta(Z)]=0,
\end{equation}
for any $X$, $Y$, $Z\in \chi(M)$. Take $Z:=\xi$ and (\ref{...}) becomes:
\begin{equation}\label{...}
k(\sigma+p)(\lambda-kp-1)[g(X,Y)+\eta(X)\eta(Y)]=0,
\end{equation}
for any $X$, $Y\in \chi(M)$ and we obtain $p=-\sigma$ or $p=\frac{\lambda-1}{k}$.
\end{enumerate}
\end{proof}

\subsection{Perfect fluid spacetime satisfying $(\xi,\cdot)_{S}\cdot \mathcal{T}=0$}

Denote by
$$A=\lambda+\frac{k(\sigma-p)}{2}, \ \ B=k(\sigma+p),$$
$$b=\frac{r}{12}\left(=\frac{4\lambda+k(\sigma-3p)}{12}\right),\ \ c=-\frac{r}{6}\left(=-\frac{4\lambda+k(\sigma-3p)}{6}\right), \ \ d=\frac{1}{2}$$
and the curvature tensors can be written:
\begin{equation}\label{e33}
S(X,Y)=Ag(X,Y)+B\eta(X)\eta(Y)
\end{equation}
\begin{equation}\label{e35}
\mathcal{P}(X,Y)Z=R(X,Y)Z+b[g(Z,X)Y-g(Y,Z)X]
\end{equation}
\begin{equation}\label{e36}
\mathcal{C}(X,Y)Z=R(X,Y)Z+c[g(Z,X)Y-g(Y,Z)X]+$$$$+d[g(Z,X)QY-g(Y,Z)QX]+d[S(Z,X)Y-S(Y,Z)X]
\end{equation}
\begin{equation}\label{e37}
\mathcal{H}(X,Y)Z=R(X,Y)Z+$$$$+d[g(Z,X)QY-g(Y,Z)QX]+d[S(Z,X)Y-S(Y,Z)X]
\end{equation}
for any $X$, $Y$, $Z\in \chi(M)$.\\

The condition $(\xi,\cdot)_{S}\cdot \mathcal{T}=0$ is equivalent to
\begin{equation}\label{e777}
S(X,\mathcal{T}(Y,Z)W)\xi-S(\xi,\mathcal{T}(Y,Z)W)X+S(X,Y)\mathcal{T}(\xi,Z)W-$$
$$-S(\xi,Y)\mathcal{T}(X,Z)W+S(X,Z)\mathcal{T}(Y,\xi)W-S(\xi,Z)\mathcal{T}(Y,X)W+$$
$$+S(X,W)\mathcal{T}(Y,Z)\xi-S(\xi,W)\mathcal{T}(Y,Z)X=0,
\end{equation}
for any $X$, $Y$, $Z$, $W\in \chi(M)$.

\smallskip

Taking the inner product with $\xi$, the relation (\ref{e777}) becomes:
\begin{equation}\label{e38}
-S(X,\mathcal{T}(Y,Z)W)-S(\xi,\mathcal{T}(Y,Z)W)\eta(X)+$$$$+S(X,Y)\eta(\mathcal{T}(\xi,Z)W)-S(\xi,Y)\eta(\mathcal{T}(X,Z)W)+$$$$+
S(X,Z)\eta(\mathcal{T}(Y,\xi)W)-S(\xi,Z)\eta(\mathcal{T}(Y,X)W)+$$$$+S(X,W)\eta(\mathcal{T}(Y,Z)\xi)-S(\xi,W)\eta(\mathcal{T}(Y,Z)X)=0,
\end{equation}
for any $X$, $Y$, $Z$, $W\in \chi(M)$.

\begin{theorem}
Let $(M,g)$ be a general relativistic perfect fluid spacetime satisfying (\ref{e5}) with torse-forming vector field $\xi$.
\begin{enumerate}
  \item If $(\xi,\cdot)_{S}\cdot R=0$, then $p=\frac{\lambda}{k}$.
  \item If $(\xi,\cdot)_{S}\cdot \mathcal{P}=0$, then $p=\frac{\lambda}{k}$ or $p=\frac{4\lambda+k\sigma-12}{3k}$.
  \item If $(\xi,\cdot)_{S}\cdot \mathcal{C}=0$, then $p=\frac{\lambda}{k}$ or $p=\frac{2\lambda-k\sigma-6}{3k}$.
  \item If $(\xi,\cdot)_{S}\cdot \mathcal{H}=0$, then $p=\frac{\lambda}{k}$ or $p=\frac{\lambda-1}{k}$.
\end{enumerate}
\end{theorem}
\begin{proof}
\begin{enumerate}
  \item From (\ref{e38}) and (\ref{e33}) we obtain:
\begin{equation}
-Ag(X,R(Y,Z)W)+A[g(X,Z)g(Y,W)-g(X,Y)g(Z,W)]+$$$$+2A[\eta(X)\eta(Z)g(Y,W)-\eta(X)\eta(Y)g(Z,W)]+$$$$+B[\eta(Y)\eta(W)g(X,Z)-\eta(Z)\eta(W)g(X,Y)]=0,
\end{equation}
for any $X$, $Y$, $Z$, $W\in \chi(M)$. Taking $Z=W:=\xi$ we get:
\begin{equation}
(2A-B)[\eta(X)\eta(Y)+g(X,Y)]=0,
\end{equation}
for any $X$, $Y\in \chi(M)$. It follows $p=\frac{\lambda}{k}$.
  \item From (\ref{e38}), (\ref{e33}) and (\ref{e35}) we obtain:
\begin{equation}
-Ag(X,R(Y,Z)W)+A(1-2b)[g(X,Z)g(Y,W)-g(X,Y)g(Z,W)]+$$$$+2A(1-b)[\eta(X)\eta(Z)g(Y,W)-\eta(X)\eta(Y)g(Z,W)]+$$
$$+B(1-b)[\eta(Y)\eta(W)g(X,Z)-\eta(Z)\eta(W)g(X,Y)]=0,
\end{equation}
for any $X$, $Y$, $Z$, $W\in \chi(M)$. Taking $Z=W:=\xi$ we get:
\begin{equation}
(2A-B)(1-b)[\eta(X)\eta(Y)+g(X,Y)]=0,
\end{equation}
for any $X$, $Y\in \chi(M)$. It follows $p=\frac{\lambda}{k}$ or $p=\frac{4\lambda+k\sigma-12}{3k}$.
  \item From (\ref{e38}), (\ref{e33}) and (\ref{e36}) we obtain:
\begin{equation}
-Ag(X,R(Y,Z)W)+A(1-2c+dB-4dA)[g(X,Z)g(Y,W)-g(X,Y)g(Z,W)]+$$$$+A(2-2c+dB-4dA)[\eta(X)\eta(Z)g(Y,W)-\eta(X)\eta(Y)g(Z,W)]+$$
$$+B(1-c+dB-3dA)[\eta(Y)\eta(W)g(X,Z)-\eta(Z)\eta(W)g(X,Y)]=0,
\end{equation}
for any $X$, $Y$, $Z$, $W\in \chi(M)$. Taking $Z=W:=\xi$ we get:
\begin{equation}
(2A-B)[1-c-d(2A-B)][\eta(X)\eta(Y)+g(X,Y)]=0,
\end{equation}
for any $X$, $Y\in \chi(M)$. It follows $p=\frac{\lambda}{k}$ or $p=\frac{2\lambda-k\sigma-6}{3k}$.
  \item From (\ref{e38}), (\ref{e33}) and (\ref{e37}) we obtain:
\begin{equation}
-Ag(X,R(Y,Z)W)+A(1+dB-4dA)[g(X,Z)g(Y,W)-g(X,Y)g(Z,W)]+$$$$+A(2+dB-4dA)[\eta(X)\eta(Z)g(Y,W)-\eta(X)\eta(Y)g(Z,W)]+$$
$$+B(1+dB-3dA)[\eta(Y)\eta(W)g(X,Z)-\eta(Z)\eta(W)g(X,Y)]=0,
\end{equation}
for any $X$, $Y$, $Z$, $W\in \chi(M)$. Taking $Z=W:=\xi$ we get:
\begin{equation}
(2A-B)[1-d(2A-B)][\eta(X)\eta(Y)+g(X,Y)]=0,
\end{equation}
for any $X$, $Y\in \chi(M)$. It follows $p=\frac{\lambda}{k}$ or $p=\frac{\lambda-1}{k}$.
\end{enumerate}
\end{proof}

Remark the following facts:

\smallskip

1) If for a general relativistic perfect fluid spacetime $(M,g)$ satisfying (\ref{e5}) with torse-forming vector field $\xi$,
the conharmonic curvature tensor $\mathcal{H}$ satisfies
$(\xi,\cdot)_{\mathcal{H}}\cdot S=0$, then we have the vacuum case or the pressure is constant, but $(\xi,\cdot)_{R}\cdot S=0$ leads only to the vacuum case.

\smallskip

2) Under the same assumptions, the condition $(\xi,\cdot)_{S}\cdot R=0$ or $(\xi,\cdot)_{S}\cdot \mathcal{H}=0$ implies a constant pressure of the fluid.

\section{Solitons in a perfect fluid spacetime}

\subsection{$\eta$-Ricci solitons}

Consider the equation:
\begin{equation}\label{e11}
\mathcal{L}_{\xi}g+2S+2a g+2b\eta\otimes \eta=0,
\end{equation}
where $g$ is a pseudo-Riemannian metric, $S$ is the Ricci tensor, $\xi$ is a vector field, $\eta$ is a $1$-form and $a$ and $b$ are real constants. The data $(g,\xi,a,b)$ which satisfy
the equation (\ref{e11}) is said to be an \textit{$\eta$-Ricci soliton} in $M$ \cite{ch}; in particular, if $b=0$, $(g,\xi,a)$ is a \textit{Ricci soliton} \cite{ham} and it is called \textit{shrinking}, \textit{steady} or \textit{expanding} according as $a$ is negative, zero or positive, respectively \cite{chlu}.

Writing explicitly the Lie derivative $\mathcal{L}_{\xi}g$ we get $(\mathcal{L}_{\xi}g)(X,Y)=g(\nabla_X\xi,Y)+g(X,\nabla_Y\xi)$ and from (\ref{e11}) we obtain:
\begin{equation}\label{e51}
S(X,Y)=-ag(X,Y)-b\eta(X)\eta(Y)-\frac{1}{2}[g(\nabla_X\xi,Y)+g(X,\nabla_Y\xi)],
\end{equation}
for any $X$, $Y\in\chi(M)$.

Contracting (\ref{e51}) we get:
\begin{equation}
r=-a\dim(M)+b-div(\xi).
\end{equation}

Let $(M,g)$ be a general relativistic perfect fluid spacetime and $(g,\xi,a,b)$ be an $\eta$-Ricci soliton in $M$. From (\ref{e52}) and (\ref{e51}) we obtain:
\begin{equation}\label{e4}
\left[\lambda+\frac{k(\sigma-p)}{2}+a\right]g(X,Y)+[k(\sigma+p)+b]\eta(X)\eta(Y)+$$$$+\frac{1}{2}[g(\nabla_X\xi,Y)+g(X,\nabla_Y\xi)]=0,
\end{equation}
for any $X$, $Y\in\chi(M)$.

Consider $\{E_i\}_{1\leq i \leq 4}$ an orthonormal frame field and let $\xi=\sum_{i=1}^{4}\xi^iE_i$. We have shown in the previous section that $\sum_{i=1}^4\varepsilon_{ii}(\xi^i)^2=-1$ and $\eta(E_i)=\varepsilon_{ii}\xi^i$.

Multiplying (\ref{e4}) by $\varepsilon_{ii}$ and summing over $i$ for $X=Y:=E_i$, we get:
\begin{equation}\label{e6}
4a-b=-4\lambda-k(\sigma-3p)-div(\xi).
\end{equation}

Writing (\ref{e4}) for $X=Y:=\xi$, we obtain:
\begin{equation}\label{e7}
a-b=-\lambda+\frac{k(\sigma+3p)}{2}.
\end{equation}

Therefore:
\begin{equation}
\left\{
  \begin{array}{ll}
    a=-\lambda-\frac{k(\sigma-p)}{2}-\frac{div(\xi)}{3} \\
    b=-k(\sigma+p)-\frac{div(\xi)}{3}
  \end{array}
\right..
\end{equation}

\begin{theorem}
Let $(M,g)$ be a $4$-dimensional pseudo-Riemannian manifold and
let $\eta$ be the $g$-dual $1$-form of the gradient vector field $\xi:=grad(f)$ with $g(\xi,\xi)=-1$. If (\ref{e11}) defines an $\eta$-Ricci soliton in $M$, then the Poisson equation satisfied by $f$ is:
\begin{equation}
\Delta(f)=-3[b+k(\sigma+p)].
\end{equation}
\end{theorem}

\begin{remark}
If $b=0$ in (\ref{e11}), we obtain the Ricci soliton with $a=-\lambda+\frac{k(\sigma+3p)}{2}$ which is steady if $p=\frac{2}{3}(\frac{\lambda}{k})-\frac{\sigma}{3}$, expanding if $p>\frac{2}{3}(\frac{\lambda}{k})-\frac{\sigma}{3}$ and shrinking if $p<\frac{2}{3}(\frac{\lambda}{k})-\frac{\sigma}{3}$. In these cases, $div(\xi)=-3k(\sigma+p)$. From Plebanski energy conditions for perfect fluids we deduce that $\sigma\geq\max\{-\frac{\lambda}{k},\frac{\lambda}{2k}\}$ for the steady case, $\sigma>\frac{\lambda}{2k}$ and $\sigma>-\frac{\lambda}{k}$ for the expanding and shrinking case, respectively.
\end{remark}

\begin{example}\label{ex1}
An $\eta$-Ricci soliton $(g,\xi,a,b)$ in a radiation fluid is given by
$$\left\{
  \begin{array}{ll}
    a=-\lambda-kp-\frac{div(\xi)}{3} \\
    b=-4kp-\frac{div(\xi)}{3}
  \end{array}
\right..$$
\end{example}

\subsection{$\eta$-Einstein solitons}

Consider the equation:
\begin{equation}\label{e41}
\mathcal{L}_{\xi}g+2S+(2a-r) g+2b\eta\otimes \eta=0,
\end{equation}
where $g$ is a pseudo-Riemannian metric, $S$ is the Ricci tensor, $r$ is the scalar curvature, $\xi$ is a vector field, $\eta$ is a $1$-form and $a$ and $b$ are real constants. The data $(g,\xi,a,b)$ which satisfy
the equation (\ref{e41}) is said to be an \textit{$\eta$-Einstein soliton} in $M$; in particular, if $b=0$, $(g,\xi,a)$ is an \textit{Einstein soliton} \cite{ca}.

Writing explicitly the Lie derivative $\mathcal{L}_{\xi}g$ we get $(\mathcal{L}_{\xi}g)(X,Y)=g(\nabla_X\xi,Y)+g(X,\nabla_Y\xi)$ and from (\ref{e41}) we obtain:
\begin{equation}\label{e42}
S(X,Y)=-\left(a-\frac{r}{2}\right)g(X,Y)-b\eta(X)\eta(Y)-\frac{1}{2}[g(\nabla_X\xi,Y)+g(X,\nabla_Y\xi)],
\end{equation}
for any $X$, $Y\in\chi(M)$.

Contracting (\ref{e42}) we get:
\begin{equation}
\frac{2-\dim(M)}{2}r=-a\dim(M)+b-div(\xi).
\end{equation}

Let $(M,g)$ be a general relativistic perfect fluid spacetime and $(g,\xi,a,b)$ be an $\eta$-Einstein soliton in $M$. From (\ref{e8}), (\ref{e52}) and (\ref{e42}) we obtain:
\begin{equation}\label{e43}
(\lambda-kp-a)g(X,Y)-[k(\sigma+p)+b]\eta(X)\eta(Y)-\frac{1}{2}[g(\nabla_X\xi,Y)+g(X,\nabla_Y\xi)]=0,
\end{equation}
for any $X$, $Y\in\chi(M)$.

Consider $\{E_i\}_{1\leq i \leq 4}$ an orthonormal frame field and let $\xi=\sum_{i=1}^{4}\xi^iE_i$. We have shown in the previous section that $\sum_{i=1}^4\varepsilon_{ii}(\xi^i)^2=-1$ and $\eta(E_i)=\varepsilon_{ii}\xi^i$.

Multiplying (\ref{e43}) by $\varepsilon_{ii}$ and summing over $i$ for $X=Y:=E_i$, we get:
\begin{equation}\label{e6}
4a-b=4\lambda+k(\sigma-3p)-div(\xi).
\end{equation}

Writing (\ref{e43}) for $X=Y:=\xi$, we obtain:
\begin{equation}\label{e7}
a-b=\lambda+k\sigma.
\end{equation}

Therefore:
\begin{equation}
\left\{
  \begin{array}{ll}
    a=\lambda-kp-\frac{div(\xi)}{3} \\
    b=-k(\sigma+p)-\frac{div(\xi)}{3}
  \end{array}
\right..
\end{equation}

\begin{theorem}
Let $(M,g)$ be a $4$-dimensional pseudo-Riemannian manifold and
let $\eta$ be the $g$-dual $1$-form of the gradient vector field $\xi:=grad(f)$ with $g(\xi,\xi)=-1$. If (\ref{e41}) defines an $\eta$-Einstein soliton in $M$, then the Poisson equation satisfied by $f$ is:
\begin{equation}
\Delta(f)=-3[b+k(\sigma+p)].
\end{equation}
\end{theorem}

\begin{remark}
If $b=0$ in (\ref{e41}), we obtain the Einstein soliton with $a=\lambda+k\sigma$ which is steady if
$\sigma=-\frac{\lambda}{k}$, expanding if $\sigma>-\frac{\lambda}{k}$ and shrinking if $\sigma<-\frac{\lambda}{k}$. In these cases, $div(\xi)=-3k(\sigma+p)$.
\end{remark}

\begin{example}\label{ex2}
An $\eta$-Einstein soliton $(g,\xi,a,b)$ in a radiation fluid is given by
$$\left\{
  \begin{array}{ll}
    a=\lambda-kp-\frac{div(\xi)}{3} \\
    b=-4kp-\frac{div(\xi)}{3}
  \end{array}
\right..$$
\end{example}

\medskip

Remark the following facts:

\smallskip

1) From Example \ref{ex1} we deduce that the Ricci soliton in a radiation fluid is steady if $p=\frac{\lambda}{3k}$,
expanding if $p>\frac{\lambda}{3k}$ and shrinking if $p<\frac{\lambda}{3k}$.

\smallskip

2) In a general relativistic perfect fluid spacetime, if the vector field $\xi$ is torse-forming with $\nabla_X\xi=s[X+\eta(X)\xi]$, for any $X\in \chi(M)$ and $s$ a nonzero real number, then $div(\xi)=3s$. In this case, the existence of a Ricci soliton given by (\ref{e11}) for $b=0$, from Plebanski energy conditions implies $-2k\sigma\leq s<0$ (precisely, $s=-k(\sigma+p)$).

\smallskip

3) If the vector field $\xi$ is conformal Killing, i.e. $L_{\xi}g=sg$ with $s$ a nonzero real number, then the existence of a Ricci soliton given by (\ref{e11}) for $b=0$, implies the vacuum case. Moreover, the soliton is steady if $p=\frac{\lambda}{k}+\frac{s}{2k}$, expanding if $p>\frac{\lambda}{k}+\frac{s}{2k}$ and shrinking if $p<\frac{\lambda}{k}+\frac{s}{2k}$.

\smallskip

4) The existence of a steady Ricci soliton or a (weakly) pseudo Ricci symmetric Ricci tensor field in a general relativistic perfect fluid spacetime with $p$ and $\sigma$ constant imply the same pressure of the fluid, a surprisingly fact being that the $1$-form $\alpha$, arbitrary chosen, does not effectively appear.

\small{

\bigskip

\bigskip
\bigskip
\bigskip
\bigskip
\bigskip

\textit{Adara M. Blaga}

\textit{Department of Mathematics}

\textit{West University of Timi\c{s}oara}

\textit{Bld. V. P\^{a}rvan nr. 4, 300223, Timi\c{s}oara, Rom\^{a}nia}

\textit{adarablaga@yahoo.com}
}

\end{document}